\numberwithin{equation}{section}
\theoremstyle{plain}	     
\newtheorem{thm}{Theorem}[section] 
\newtheorem{lem}[thm]{Lemma}
\theoremstyle{definition}
\theoremstyle{remark} 
\newtheorem{rem}[thm]{Remark}
\newcommand{\tam}{\operatorname{tam}}
\newcommand{\tamh}{\operatorname{tamh}}
\begin{document}

\title{Applications of a duality between generalized trigonometric and hyperbolic functions II
\footnote{The work of S.T. was supported by JSPS KAKENHI Grant Number 17K05336.}}
\author{Hiroki Miyakawa and Shingo Takeuchi \\
Department of Mathematical Sciences\\
Shibaura Institute of Technology
\thanks{307 Fukasaku, Minuma-ku,
Saitama-shi, Saitama 337-8570, Japan. \endgraf
{\it E-mail address\/}: shingo@shibaura-it.ac.jp \endgraf
{\it 2010 Mathematics Subject Classification.} 
33B10 (26D05 26D07 31C45 34A34)}}
\date{}

\maketitle

\begin{abstract}
Generalized trigonometric functions and generalized hyperbolic functions can be 
converted to each other by the duality formulas previously discovered by the authors.
In this paper, we apply the duality formulas to prove dual pairs of Wilker-type 
inequalities, Huygens-type inequalities, and (relaxed) Cusa-Huygens-type inequalities 
for the generalized functions. In addition, multiple- and double-angle formulas 
not previously obtained are also given.
\end{abstract}

\textbf{Keywords:} 
Generalized trigonometric functions,
Generalized hyperbolic functions,
Mitrinovi\'{c}-Adamovi\'{c} inequalities,
Wilker inequalities,
Huygens inequalities,
Cusa-Huygens inequalities,
Multiple-angle formulas,
Double-angle formulas,
$p$-Laplacian.


\section{Introduction}

Generalized trigonometric functions (GTFs) and generalized hyperbolic
functions (GHFs) are natural mathematical generalizations of the trigonometric
and hyperbolic functions, respectively. They have been applied not only to
generalize $\pi$ and the complete elliptic integrals, but also to analyze 
nonlinear differential equations involving $p$-Laplacian (see monographs 
\cites{Dosly2005,Lang2011} and survey \cite{YHWL2019}, and the references
given there).

Although GTFs and GHFs have been actively studied,
they have been treated separately (e.g.,
\cites{Dosly2005,Klen,MSZH,Neuman2015,YHQ,YHWL2019}). In our 
previous work [9], the authors proved duality formulas that can transform GTFs and GHFs into 
each other. As an application, we were able to construct generalized inequalities of 
the classical Mitrinovi\'{c}-Adamovi\'{c} inequalities to GTFs and GHFs such that 
they are 
dual pairs to each other in the sense explained in the next section.

In this paper, following \cite{Miyakawa-Takeuchi}, we will generalize the old and 
vigorously studied Wilker inequalities, Huygens inequalities and (relaxed) Cusa-
Huygens inequalities to GTFs and GHFs. In fact, previous works, e.g., 
\cites{Klen,MSZH,Neuman2015,YHQ,YHWL2019}, have made various generalizations of 
these inequalities, but the trigonometric and hyperbolic versions are not in dual pairs. 
On the other hand, the pairs we create in the present paper are dual to each other.

This paper is organized as follows. Section \ref{sec:preparation} summarizes the 
definitions of GTFs and GHFs and their properties, including the duality formulas
obtained in \cite{Miyakawa-Takeuchi}. 
Here, the conditions imposed on the parameters contained in these functions are 
more extended than usual. This extension reveals the duality between both 
generalized functions. In Section \ref{sec:inequalities}, we generalize the classical 
Wilker inequalities, Huygens inequalities, and (relaxed) Cusa-Huygens inequalities to 
GTFs and GHFs. It should be noted that the pairs of inequalities obtained there are 
dual to each other. In Section \ref{sec:formulas}, as a further application of the duality 
formulas, we provide multiple- and double-angle formulas for GTFs and GHFs. 
Although some formulas have already been obtained in previous studies
(cf. \cite{Takeuchi2016} and Table \ref{hyo} in Section \ref{sec:formulas}), we give 
formulas for parameters for which no formulas were previously known.

\section{Preparation}
\label{sec:preparation}

In this section, we summarize the definitions and
some properties of GTFs and GHFs (see \cite{Miyakawa-Takeuchi} for more details).
The relationship between GTFs and GHFs can be seen by making the range of 
parameters in the functions wider than the conventional definition.

Let us assume
\begin{equation}
\label{eq:pq}
\frac{q}{q+1}<p<\infty,\quad 0<q<\infty,
\end{equation}
and
$$F_{p,q}(y):=\int_0^y \frac{dt}{(1-t^q)^{1/p}}, \quad y \in [0,1).$$
We will denote by $\sin_{p,q}$ the inverse function of $F_{p,q}$, i.e.,
$$\sin_{p,q}{x}:=F_{p,q}^{-1}(x).$$
Clearly, $\sin_{p,q}{x}$ is monotone increasing on
$[0,\pi_{p,q}/2)$ onto $[0,1)$,
where
\begin{align*}
\pi_{p,q}:&=2F_{p,q}(1)=2\int_0^1 \frac{dt}{(1-t^q)^{1/p}}\\
&=
\begin{cases}
(2/q)B(1-1/p,1/q), & 1<p<\infty,\\
\infty, & q/(q+1)<p \leq 1,
\end{cases}
\end{align*}
and $B$ is the beta function.
In almost all literature dealing with GTFs, 
the parameters $p,\ q$ are assumed to be $p,\ q>1$,
but we here allow them to be $p,\ q \leq 1$.
Note that the condition $q/(q+1)<p \leq 1$ implies that $\sin_{p,q}{x}$ is monotone
increasing on the \textit{infinite} interval 
$[0,\infty)$ and no longer similar to $\sin{x}$, but to $\tanh{x}$
(Figure \ref{fig:sin}).

\begin{figure}[htbp]
\begin{center}
\includegraphics[width=0.4\linewidth]{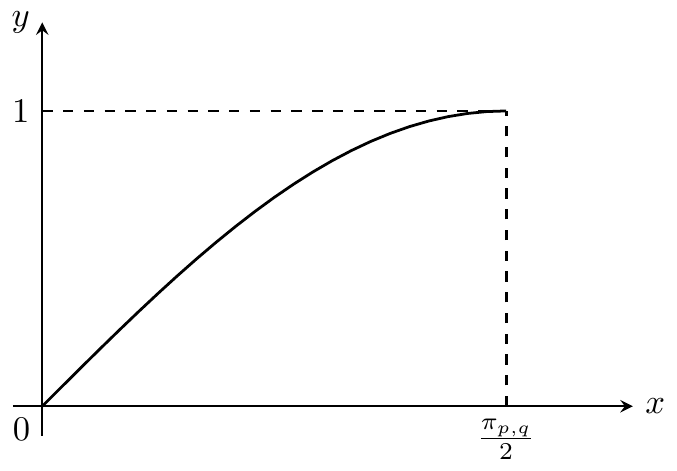} \qquad
\raisebox{1.5mm}{\includegraphics[width=0.4\linewidth]{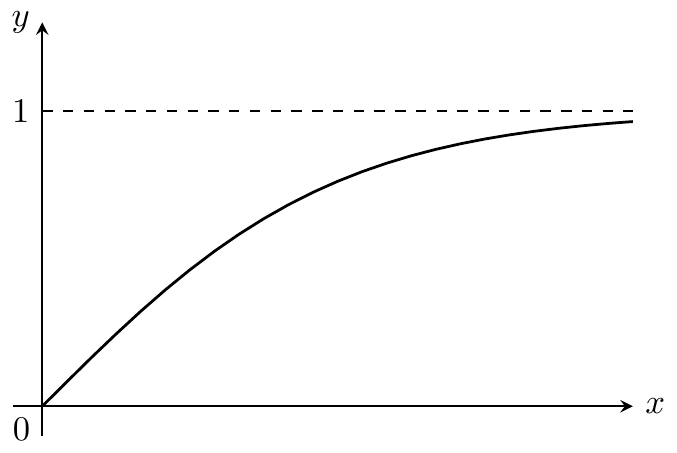}}
\caption{The graphs of $\sin_{p,q}{x}$: $p>1$ (left) and $q/(q+1)<p \leq 1$ (right).}
\label{fig:sin}
\end{center}
\end{figure}

Since $\sin_{p,q}{x} \in C^1(0,\pi_{p,q}/2)$,
we also define $\cos_{p,q}{x}$ by 
$$\cos_{p,q}{x}:=\frac{d}{dx}(\sin_{p,q}{x}).$$
Then, it follows that 
\begin{equation}
\label{eq:Tpythagoras}
\cos_{p,q}^p{x}+\sin_{p,q}^q{x}=1.
\end{equation}
In case $(p,q)=(2,2)$, it is obvious that $\sin_{p,q}{x},\ \cos_{p,q}{x}$ 
and $\pi_{p,q}$ are reduced to the ordinary $\sin{x},\ \cos{x}$ and $\pi$,
respectively. Therefore these functions and the constant are called
\textit{generalized trigonometric functions} (GTFs)
and the \textit{generalized $\pi$}, respectively. 
It is easy to check that $u=\sin_{p,q}{x}$ is a solution of
the initial value problem of $p$-Laplacian
\begin{equation}
\label{eq:ivp}
(|u'|^{p-2}u')'+\frac{(p-1)q}{p} |u|^{q-2}u=0, \quad u(0)=0,\ u'(0)=1,
\end{equation}
which is closely related to 
the eigenvalue problem of $p$-Laplacian.

In a similar way, we assume \eqref{eq:pq} and
$$G_{p,q}(y):=\int_0^y \frac{dt}{(1+t^q)^{1/p}}, \quad y \in [0,\infty).$$
We will denote by $\sinh_{p,q}$ the inverse function of $G_{p,q}$, i.e.,
$$\sinh_{p,q}{x}:=G_{p,q}^{-1}(x).$$
Clearly, $\sinh_{p,q}{x}$ is monotone increasing
on $[0,\pi_{r,q}/2)$ onto $[0,\infty)$,
where $r$ is the positive constant determined by
\begin{equation}
\label{eq:rdefi}
\frac{1}{p}+\frac{1}{r}=1+\frac{1}{q}, \quad \mbox{i.e.}, \quad
r=\frac{pq}{pq+p-q}.
\end{equation}
Indeed, by $1+t^q=1/(1-s^q)$,
\begin{align*}
\lim_{y \to \infty}G_{p,q}(y)=
\int_0^\infty \frac{dt}{(1+t^q)^{1/p}}
=\int_0^1 \frac{ds}{(1-s^q)^{1/r}}
=\frac{\pi_{r,q}}{2}.
\end{align*}
The important point to note here is that for a fixed $q \in (0,\infty)$,
if $r=r_q(p)$ is regarded as a function of $p$, then 
\begin{gather}
\mbox{$r_q$ is bijective from $(q/(q+1),\infty)$ to itself, and}
\label{eq:r(p)}\\
r_q(r_q(p))=p. \label{eq:r(r)=p}
\end{gather}
In particular, $\pi_{r,q}$ has been defined under \eqref{eq:pq}.
If $r>1$, i.e., $p<q$, then $\sinh_{p,q}{x}$ is defined in
the \textit{bounded} interval $[0,\pi_{r,q}/2)$ with 
$\lim_{x \to \pi_{r,q}/2}\sinh_{p,q}{x}=\infty$ and
no longer similar to $\sinh{x}$, but to $\tan{x}$
(Figure \ref{fig:sinh}).

\begin{figure}[htbp]
\begin{center}
\includegraphics[width=0.3\linewidth]{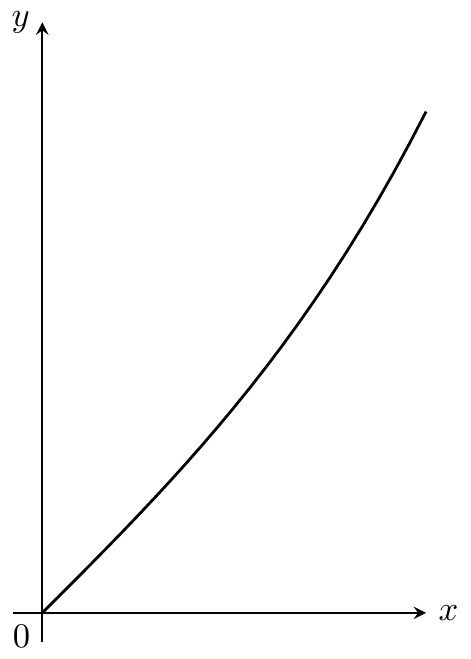} \qquad
\raisebox{-1.5mm}{\includegraphics[width=0.3\linewidth]{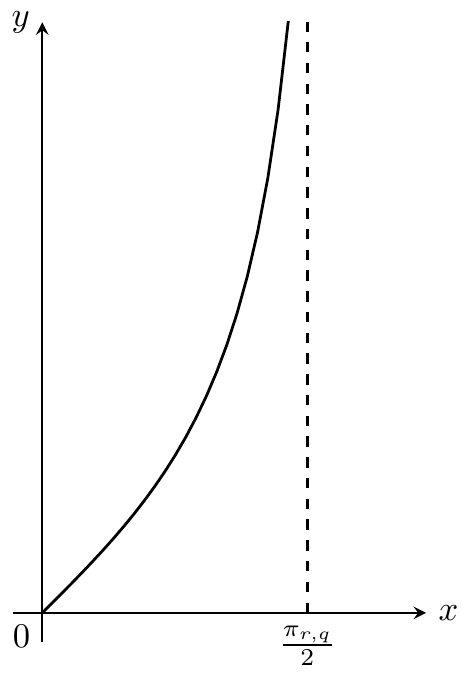}}
\caption{The graphs of $\sinh_{p,q}{x}$: $r \leq 1$ (left) and $r>1$ (right).}
\label{fig:sinh}
\end{center}
\end{figure}

Since $\sinh_{p,q}{x} \in C^1(0,\pi_{r,q}/2)$,
we also define $\cosh_{p,q}{x}$ by 
$$\cosh_{p,q}{x}:=\frac{d}{dx}(\sinh_{p,q}{x}).$$
Then, it follows that 
\begin{equation}
\label{eq:Hpythagoras}
\cosh_{p,q}^p{x}-\sinh_{p,q}^q{x}=1.
\end{equation}
In case $(p,q)=(2,2)$, it is obvious that $\sinh_{p,q}{x},\ \cosh_{p,q}{x}$ 
and the interval $[0,\pi_{r,q}/2)$ are reduced to 
$\sinh{x},\ \cosh{x}$ and $[0,\infty)$, respectively. 
Therefore these functions are called
\textit{generalized hyperbolic functions} (GHFs). 
Just as $\sin_{p,q}{x}$ satisfies \eqref{eq:ivp}, 
$u=\sinh_{p,q}{x}$ is a solution of 
the initial value problem of $p$-Laplacian
$$(|u'|^{p-2}u')'-\frac{(p-1)q}{p} |u|^{q-2}u=0, \quad u(0)=0,\ u'(0)=1.$$

We generalize the tangent and hyperbolic tangent functions in two ways.
These functions are often generalized as
$$\tan_{p,q}{x}:=\frac{\sin_{p,q}{x}}{\cos_{p,q}{x}}, \quad
\tanh_{p,q}{x}:=\frac{\sinh_{p,q}{x}}{\cosh_{p,q}{x}}$$
(e.g., 
\cites{Dosly2005,Edmunds2012,Klen,Lang2011,MSZH,Neuman2015,YHQ,YHWL2019}).
However, for practical purposes, 
the following modified functions are more convenient than 
the functions above:
$$\tam_{p,q}x:=\frac{\sin_{p,q}x}{\cos_{p,q}^{p/q}x}, \quad
\tamh_{p,q}x:=\frac{\sinh_{p,q}x}{\cosh_{p,q}^{p/q}x}.$$
These modified functions were first introduced in \cites{Miyakawa-Takeuchi,Takeuchi2016}
with the symbols $\tau_{p,q},\ \tilde{\tau}_{p,q}$, respectively.
Note that if $p=q$, then $\tam_{p,q}{x}=\tan_{p,q}{x}$ and  
$\tamh_{p,q}{x}=\tanh_{p,q}{x}$.

In \cite{Miyakawa-Takeuchi}, 
we proved the following duality properties between GTFs and GHFs.
This property remains important in the present paper.

\begin{thm}[\cite{Miyakawa-Takeuchi}]\label{lem:GHFGTFrelation}
Let $p$ and $q$ satisfy \eqref{eq:pq} and $r$ be the positive number defined as 
\eqref{eq:rdefi}. Then, for $x\in[0,\pi_{p,q}/2)$,
\begin{align*}
&\sin_{p,q}x=\frac{\sinh_{r,q}x}{\cosh_{r,q}^{r/q}x}=\tamh_{r,q}x,\\
&\cos_{p,q}x=\frac{1}{\cosh_{r,q}^{r/p}x},\\
&\tam_{p,q}x=\sinh_{r,q}x.
\end{align*}
\end{thm}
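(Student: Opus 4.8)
The plan is to prove the three identities by relating the defining integrals $F_{p,q}$ and $G_{r,q}$ through an explicit substitution, and then reading off the trigonometric–hyperbolic correspondence. The key observation is that the duality \eqref{eq:rdefi} between $p$ and $r$ is designed precisely so that the generalized Pythagorean identities \eqref{eq:Tpythagoras} and \eqref{eq:Hpythagoras} match up. First I would fix $x\in[0,\pi_{p,q}/2)$ and set $s=\sin_{p,q}x$, so that by definition $x=F_{p,q}(s)=\int_0^s(1-t^q)^{-1/p}\,dt$. The goal is to show that this same $x$ equals $G_{r,q}(w)$ for the appropriate value $w=\sinh_{r,q}x$, which will pin down the stated relations.

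The main computational step is the change of variables in the integral defining $F_{p,q}$. The earlier computation in the excerpt already uses the substitution $1+t^q=1/(1-\sigma^q)$ to convert $G$-integrals into $F$-integrals; I would run essentially the same substitution in reverse. Concretely, writing $t=\sinh_{r,q}u/\cosh_{r,q}^{?}u$ style substitutions, or more cleanly setting $\sigma^q = t^q/(1+t^q)$ so that $1-\sigma^q = (1+t^q)^{-1}$, one finds $d\sigma$ and the integrand transform so that $\int_0^w (1+t^q)^{-1/r}\,dt$ becomes $\int_0^{\sigma(w)}(1-\sigma^q)^{-1/p}\,d\sigma$, using \eqref{eq:rdefi} in the form $1/p + 1/r = 1 + 1/q$ to get the exponents to agree. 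This identity of integrals gives $F_{p,q}(\sigma(w)) = G_{r,q}(w)$, hence $\sin_{p,q}(G_{r,q}(w)) = \sigma(w)$. Setting $w=\sinh_{r,q}x$ so that $G_{r,q}(w)=x$, and unwinding the relation between $\sigma$ and $t$ using the hyperbolic Pythagorean identity \eqref{eq:Hpythagoras} in the form $\cosh_{r,q}^r x - \sinh_{r,q}^q x = 1$, yields $\sigma(w) = \sinh_{r,q}x/\cosh_{r,q}^{r/q}x$, which is exactly the claimed formula for $\sin_{p,q}x$, i.e. $\tamh_{r,q}x$.

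With the first identity established, the second should follow by differentiation or, more transparently, from the Pythagorean identities. From $\sin_{p,q}^q x = \sinh_{r,q}^q x/\cosh_{r,q}^r x$ and \eqref{eq:Tpythagoras}, I would compute
\begin{equation*}
\cos_{p,q}^p x = 1-\sin_{p,q}^q x = 1-\frac{\sinh_{r,q}^q x}{\cosh_{r,q}^r x} = \frac{\cosh_{r,q}^r x-\sinh_{r,q}^q x}{\cosh_{r,q}^r x} = \frac{1}{\cosh_{r,q}^r x},
\end{equation*}
where the last equality is \eqref{eq:Hpythagoras}. Taking $p$-th roots gives $\cos_{p,q}x = \cosh_{r,q}^{-r/p}x$, the second formula. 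The third formula is then immediate: by definition $\tam_{p,q}x = \sin_{p,q}x/\cos_{p,q}^{p/q}x$, and substituting the first two identities,
\begin{equation*}
\tam_{p,q}x = \frac{\sinh_{r,q}x/\cosh_{r,q}^{r/q}x}{\left(\cosh_{r,q}^{-r/p}x\right)^{p/q}} = \frac{\sinh_{r,q}x}{\cosh_{r,q}^{r/q}x}\cdot\cosh_{r,q}^{r/q}x = \sinh_{r,q}x.
\end{equation*}

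The step I expect to be the main obstacle is the change-of-variables computation establishing $F_{p,q}(\sigma(w))=G_{r,q}(w)$, since one must verify that the transformed integrand carries exactly the exponent $1/p$ and that the substitution maps the interval $[0,\pi_{r,q}/2)$ correctly onto $[0,\pi_{p,q}/2)$; the role of \eqref{eq:rdefi} and the bijectivity \eqref{eq:r(p)}–\eqref{eq:r(r)=p} of $r_q$ is crucial here to guarantee that both $\sin_{p,q}$ and $\sinh_{r,q}$ are genuinely defined on the same parameter range and that the endpoints match ($\pi_{p,q}/2$ on the trigonometric side corresponds to the domain of $\sinh_{r,q}$). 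Once this single integral identity is secured, the remaining two formulas are purely algebraic consequences of the two Pythagorean identities and require no further analysis.
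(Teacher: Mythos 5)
Your proof is correct: the substitution $1-\sigma^q=(1+t^q)^{-1}$ together with \eqref{eq:rdefi} indeed yields $G_{r,q}(w)=F_{p,q}\bigl(w/(1+w^q)^{1/q}\bigr)$ (the transformed exponent is $1/r-1-1/q=-1/p$), the domains match since by \eqref{eq:r(r)=p} the function $\sinh_{r,q}$ lives precisely on $[0,\pi_{p,q}/2)$, and the second and third identities then follow algebraically from \eqref{eq:Tpythagoras} and \eqref{eq:Hpythagoras} exactly as you write. This is essentially the same route as the source: the present paper quotes the theorem from \cite{Miyakawa-Takeuchi} without reproving it, but the very change of variables you employ, $1+t^q=1/(1-s^q)$, is the one displayed in Section~\ref{sec:preparation} to show $\lim_{y\to\infty}G_{p,q}(y)=\pi_{r,q}/2$, and it is how the duality formulas are established in the cited work.
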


\begin{thm}[\cite{Miyakawa-Takeuchi}]\label{lem:GTFGHFrelation}
Let $p$ and $q$ satisfy \eqref{eq:pq} and $r$ be the positive number defined as 
\eqref{eq:rdefi}. Then, for $x\in[0,\pi_{r,q}/2)$,
\begin{align*}
&\sinh_{p,q}x=\frac{\sin_{r,q}x}{\cos_{r,q}^{r/q}x}=\tam_{r,q}x,\\
&\cosh_{p,q}x=\frac{1}{\cos_{r,q}^{r/p}x},\\
&\tamh_{p,q}x=\sin_{r,q}x.
\end{align*}
\end{thm}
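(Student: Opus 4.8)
The plan is to deduce this statement from Theorem \ref{lem:GHFGTFrelation} by exploiting the fact that the map $p\mapsto r$ defined by \eqref{eq:rdefi} is an involution. First I would note that, by \eqref{eq:r(p)}, the function $r_q$ maps $(q/(q+1),\infty)$ bijectively onto itself; hence the pair $(r,q)$ again satisfies the standing hypothesis \eqref{eq:pq}, so Theorem \ref{lem:GHFGTFrelation} is applicable with $(r,q)$ in place of $(p,q)$. The crucial point is \eqref{eq:r(r)=p}: the positive number associated to $(r,q)$ through \eqref{eq:rdefi} is $r_q(r)=r_q(r_q(p))=p$. Applying Theorem \ref{lem:GHFGTFrelation} under the substitution $p\mapsto r$, so that its internal ``$r$'' becomes our $p$, therefore yields, for $x\in[0,\pi_{r,q}/2)$,
\begin{align*}
&\sin_{r,q}x=\frac{\sinh_{p,q}x}{\cosh_{p,q}^{p/q}x}=\tamh_{p,q}x,\\
&\cos_{r,q}x=\frac{1}{\cosh_{p,q}^{p/r}x},\\
&\tam_{r,q}x=\sinh_{p,q}x.
\end{align*}

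Next I would read the desired formulas off these identities. The third line is exactly $\sinh_{p,q}x=\tam_{r,q}x$, and since $\tam_{r,q}x=\sin_{r,q}x/\cos_{r,q}^{r/q}x$ by definition, this gives the first asserted line. The first line above, reversed, is precisely $\tamh_{p,q}x=\sin_{r,q}x$, the third asserted line. To obtain the middle one, I would raise the relation $\cos_{r,q}x=\cosh_{p,q}^{-p/r}x$ to the power $-r/p$; the exponents then cancel, giving $\cosh_{p,q}x=\cos_{r,q}^{-r/p}x=1/\cos_{r,q}^{r/p}x$.

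I expect no substantial obstacle, as the argument is purely formal once Theorem \ref{lem:GHFGTFrelation} and the involution property are available. The only steps needing care are the bookkeeping of which parameter plays the role of ``$r$'' after the substitution $p\mapsto r$, and the appeal to \eqref{eq:r(r)=p} that makes applying the duality twice return the original parameter. It is also worth checking that the domain $[0,\pi_{r,q}/2)$ in the claim coincides with the domain of Theorem \ref{lem:GHFGTFrelation} after the substitution, which it does automatically since $\pi_{p,q}/2$ becomes $\pi_{r,q}/2$.
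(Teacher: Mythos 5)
Your proposal is correct. Applying Theorem \ref{lem:GHFGTFrelation} to the admissible pair $(r,q)$ (admissible by \eqref{eq:r(p)}), and noting via \eqref{eq:r(r)=p} that the dual parameter of $r$ is $p$, does yield exactly the three identities you list on $[0,\pi_{r,q}/2)$; the stated formulas then follow from the definition $\tam_{r,q}x=\sin_{r,q}x/\cos_{r,q}^{r/q}x$ and from raising $\cos_{r,q}x=\cosh_{p,q}^{-p/r}x$ to the power $-r/p$, which is legitimate since $\cos_{r,q}x>0$ on that interval and $\cosh_{p,q}x\geq 1$. Be aware, though, that the present paper does not prove this theorem at all: both duality theorems are imported from \cite{Miyakawa-Takeuchi}, where they are established by direct changes of variables in the defining integrals, in the spirit of the substitution $t^q=u^q/(4(1-u^{q/2}))$ carried out in the proof of Theorem \ref{thm:sin_new_double2}. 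So your argument is a genuinely different and more economical route: it shows that only one of the two duality theorems requires an integral-substitution proof, the other being a purely formal consequence of the involution property \eqref{eq:r(r)=p}. This is precisely the bookkeeping mechanism the paper itself exploits throughout Section \ref{sec:inequalities} --- each hyperbolic inequality, e.g.\ \eqref{eq:wilkerhyper}, is deduced from its trigonometric partner by applying Theorem \ref{lem:GHFGTFrelation} with the substituted parameters together with \eqref{eq:r(p)} and \eqref{eq:r(r)=p} --- and your proof simply applies the same device at the level of the identities themselves, with the parameter-tracking and the domain $[0,\pi_{r,q}/2)$ handled correctly.
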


\begin{rem}
In \cite{Miyakawa-Takeuchi}, we have supposed $q$ to be $1<q<\infty$.
However, the proofs therein are perfectly valid in the case $0<q<\infty$ as well.
The same is true for Theorem \ref{thm:MA} below.
\end{rem}

Theorems \ref{lem:GHFGTFrelation} and \ref{lem:GTFGHFrelation} tell us the 
counterparts to GHFs of the properties already known for GTFs, and vice versa.
For example, Theorem \ref{lem:GHFGTFrelation} immediately converts 
\eqref{eq:Tpythagoras} into \eqref{eq:Hpythagoras} (with $p$ replaced by $r$);
that is,
\begin{equation}
\label{eq:p+q=1'}
\cos_{p,q}^p{x}+\sin_{p,q}^q{x}=1
\end{equation}
into
\begin{equation}
\label{eq:p-q=1'}
\cosh_{r,q}^r{x}-\sinh_{r,q}^q{x}=1;
\end{equation}
and Theorem \ref{lem:GTFGHFrelation} (with \eqref{eq:r(r)=p}) does vice versa.
Hence, it follows from \eqref{eq:r(p)} that 
\eqref{eq:p+q=1'} and \eqref{eq:p-q=1'} correspond one-to-one.
In this sense, we say that inequalities \eqref{eq:p+q=1'} and \eqref{eq:p-q=1'} 
(i.e., \eqref{eq:Hpythagoras} with $p$ replaced by $r$)
are
\textit{dual} to each other.

Moreover, using our theorems, the authors \cite{Miyakawa-Takeuchi}*{Theorem 1.3} 
have shown the generalization 
of Mitrinovi\'{c}-Adamovi\'{c} inequalities.
The generalized inequalities will be applied in the next section, and are 
discussed now.
The classical Mitrinovi\'{c}-Adamovi\'{c} inequalities are as follows:
\begin{gather*}
\cos^{1/3}{x}<\frac{\sin{x}}{x}<1, \quad x \in \left(0,\frac{\pi}{2}\right),\\
\cosh^{1/3}{x}<\frac{\sinh{x}}{x}<\cosh{x}, \quad x \in \left(0,\infty\right).
\end{gather*}
The latter is also called the Lazarevi\'c inequality.
Kl\'{e}n et al. \cite{Klen}*{Theorems 3.6 and 3.8}
extend these inequalities to the one-parameter case: for $p \in (1,\infty)$,
\begin{gather*}
\cos_p^{1/(p+1)}{x}<\frac{\sin_p{x}}{x}<1, \quad x \in \left(0,\frac{\pi_p}{2}\right),\\
\cosh_p^{1/(p+1)}{x}<\frac{\sinh_p{x}}{x}<\cosh_p{x}, \quad x \in \left(0,\infty\right),
\end{gather*}
where $\sin_p{x}:=\sin_{p,p}{x}$ and the other functions are defined in the same way.
Moreover, Ma et al. \cite{MSZH}*{Lemma 2} obtain 
the inequalities for the two-parameter case: for $p,\ q \in (1,\infty)$,
\begin{gather}
\cos_{p,q}^{1/(q+1)}{x}<\frac{\sin_{p,q}{x}}{x}<1, \quad x \in \left(0,\frac{\pi_{p,q}}{2}\right), \label{eq:MAtrigo'}\\
\cosh_{p,q}^{1/(q+1)}{x}<\frac{\sinh_{p,q}{x}}{x}<\cosh_{p,q}{x} 
\quad \mbox{for appropriate $x$}.
\label{eq:MAhyper'}
\end{gather}
The proofs of Kl\'en et al. \cite{Klen} and Ma et al. \cite{MSZH} 
are similar, and both prove the inequalities for the trigonometric case 
and the hyperbolic case separately in the same way.

However, \eqref{eq:MAtrigo'} and \eqref{eq:MAhyper'} (with $p$ replaced by $r$)
are not dual to each other. 
A dual pair of Mitrinovi\'{c}-Adamovi\'{c}-type inequalities is as follows:

\begin{thm}[Mitrinovi\'{c}-Adamovi\'{c}-type inequalities with duality, \cite{Miyakawa-Takeuchi}]\label{thm:MA}
Let $p$ and $q$ satisfy \eqref{eq:pq} and $r$ be the positive number defined as \eqref{eq:rdefi}. 
Then,
\begin{gather}
\cos_{p,q}^{1/(q+1)}x<\frac{\sin_{p,q}x}{x}<1, \quad x\in \left(0,\frac{\pi_{p,q}}{2}\right), \label{eq:MAtrigo}\\
\cosh_{p,q}^{1/(q+1)}x<\frac{\sinh_{p,q}x}{x}<\cosh_{p,q}^{p/q}x,
\quad x\in \left(0,\frac{\pi_{r,q}}{2}\right).\label{eq:MAhyper}
\end{gather}
Moreover, \eqref{eq:MAtrigo} and \eqref{eq:MAhyper} (with $p$ replaced by $r$)
are dual to each other.
\end{thm}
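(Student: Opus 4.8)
The plan is to prove the trigonometric inequality \eqref{eq:MAtrigo} directly and then read off the hyperbolic inequality \eqref{eq:MAhyper} for free via the duality of Theorem \ref{lem:GTFGHFrelation}; the assertion that the two are ``dual to each other'' will then amount to nothing more than the observation that this transfer is an exact, reversible substitution. Since both inequalities transfer simultaneously, there is no need to treat the hyperbolic estimate by its own integral representation.

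For \eqref{eq:MAtrigo} I would pass to the integral representation by setting $y=\sin_{p,q}x\in(0,1)$, so that $x=F_{p,q}(y)$ and, by \eqref{eq:Tpythagoras}, $\cos_{p,q}^p x=1-y^q$. The upper bound $\sin_{p,q}x<x$ is then immediate, since the integrand $(1-t^q)^{-1/p}$ exceeds $1$ on $(0,1)$ and hence $x=F_{p,q}(y)>y$. For the lower bound I would introduce
$$H(y):=\frac{y}{(1-y^q)^{1/(p(q+1))}}-\int_0^y\frac{dt}{(1-t^q)^{1/p}},\qquad y\in(0,1),$$
for which $H(0)=0$ and whose positivity is exactly the left half of \eqref{eq:MAtrigo}. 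Differentiating and clearing the common positive factor $(1-y^q)^{-1-1/(p(q+1))}$, the condition $H'(y)>0$ reduces, on writing $\beta:=q/(p(q+1))$, to
$$1-(1-\beta)y^q>(1-y^q)^{1-\beta}.$$
This is precisely the strict Bernoulli inequality $(1-u)^{\gamma}<1-\gamma u$ with $u=y^q\in(0,1)$ and $\gamma=1-\beta$. The parameter hypothesis \eqref{eq:pq}, namely $p>q/(q+1)$, is exactly what forces $\beta<1$ and hence keeps the Bernoulli exponent $\gamma$ in $(0,1)$; thus $H'>0$, so $H>0$, proving \eqref{eq:MAtrigo}.

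With \eqref{eq:MAtrigo} established for every admissible pair, I would apply it to the pair $(r,q)$ — legitimate because $r=r_q(p)$ again satisfies \eqref{eq:pq} by \eqref{eq:r(p)}, and the interval $(0,\pi_{r,q}/2)$ matches the one in \eqref{eq:MAhyper} — and substitute the identities of Theorem \ref{lem:GTFGHFrelation}, namely $\cos_{r,q}x=\cosh_{p,q}^{-p/r}x$ and $\sin_{r,q}x=\sinh_{p,q}x\,\cosh_{p,q}^{-p/q}x$, valid on $(0,\pi_{r,q}/2)$. The bound $\sin_{r,q}x<x$ becomes $\sinh_{p,q}x/x<\cosh_{p,q}^{p/q}x$ verbatim, while $\cos_{r,q}^{1/(q+1)}x<\sin_{r,q}x/x$ turns into $\cosh_{p,q}^{\,p/q-p/(r(q+1))}x<\sinh_{p,q}x/x$. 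The step that makes everything clean is the collapse of the exponent: using $1/r=1+1/q-1/p$ from \eqref{eq:rdefi} one checks that $p/q-p/(r(q+1))=1/(q+1)$, so the last inequality is exactly the left half of \eqref{eq:MAhyper}. As each substitution is reversible, \eqref{eq:MAtrigo} at $(r,q)$ and \eqref{eq:MAhyper} at $(p,q)$ are equivalent, which is the claimed duality.

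The genuine analytic content lives entirely in the lower bound of \eqref{eq:MAtrigo}, and the main obstacle is to massage $H'(y)>0$ into a recognizable shape; the reward is that it lands precisely on Bernoulli's inequality, with the restriction \eqref{eq:pq} playing exactly the role of keeping the Bernoulli exponent admissible. Everything on the hyperbolic side is then bookkeeping of exponents through the duality formulas, the only subtlety being the pleasant cancellation to $1/(q+1)$ noted above.
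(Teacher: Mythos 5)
Your proposal is correct, and its overall skeleton coincides with the paper's: the hyperbolic inequality \eqref{eq:MAhyper} is obtained from the trigonometric one \eqref{eq:MAtrigo} purely by the duality formulas. Note, however, that the paper does not reprove \eqref{eq:MAtrigo} at all --- Theorem \ref{thm:MA} is imported from \cite{Miyakawa-Takeuchi}, with only the remark that \eqref{eq:MAhyper} follows immediately from \eqref{eq:MAtrigo} by Theorem \ref{lem:GHFGTFrelation} --- so the substantive new content of your write-up is the self-contained proof of \eqref{eq:MAtrigo}, and that argument is sound: with $y=\sin_{p,q}x$ the identity $\cos_{p,q}^p x=1-y^q$ reduces the left inequality to $H(y)>0$; your factorization of $H'$ is exact (the bracket is $1-(1-\beta)y^q-(1-y^q)^{1-\beta}$ with $\beta=q/(p(q+1))$); the hypothesis \eqref{eq:pq} is precisely $\beta<1$, so the strict Bernoulli inequality with exponent $1-\beta\in(0,1)$ applies; and the upper bound is immediate since the integrand exceeds $1$. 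Your exponent collapse $p/q-p/(r(q+1))=1/(q+1)$ also checks out against \eqref{eq:rdefi}. The only cosmetic divergence from the paper is the direction of the transfer: the paper applies \eqref{eq:MAtrigo} at $(p,q)$ and converts via Theorem \ref{lem:GHFGTFrelation} (using $\sin_{p,q}x=\tamh_{r,q}x$ and $\cos_{p,q}x=\cosh_{r,q}^{-r/p}x$), landing on \eqref{eq:MAhyper} with $p$ replaced by $r$, whereas you apply \eqref{eq:MAtrigo} at $(r,q)$ and convert via Theorem \ref{lem:GTFGHFrelation}; by \eqref{eq:r(p)} and the involution \eqref{eq:r(r)=p} these are the same argument read in opposite directions, and your closing observation that every substitution is reversible is exactly what the paper means by the two inequalities being dual. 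What your route buys is independence from the prior paper: you verify the trigonometric half directly on the full range $0<q<\infty$, $p>q/(q+1)$, a range for which the paper merely asserts (in a Remark) that the proofs of \cite{Miyakawa-Takeuchi}, written for $1<q<\infty$, remain valid.
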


\begin{rem}
If $p=q$, then \eqref{eq:MAtrigo} and \eqref{eq:MAhyper} are
equal to \eqref{eq:MAtrigo'} and \eqref{eq:MAhyper'}; hence, to
the one-parameter ones above.
\end{rem}

In our approach in \cite{Miyakawa-Takeuchi}, 
Theorem \ref{thm:MA} allows us to obtain the inequalities 
over the wider range \eqref{eq:pq} of parameters,
and \eqref{eq:MAhyper} immediately follows from \eqref{eq:MAtrigo} 
by Theorem \ref{lem:GHFGTFrelation}.

\section{Dual pairs of inequalities}
\label{sec:inequalities}

In this section,
we generalize the Wilker, Huygens, and (relaxed) Cusa-Huygens inequalities 
for GTFs and GHFs to a form with duality 
using our duality formulas (Theorems \ref{lem:GHFGTFrelation} and \ref{lem:GTFGHFrelation}),
just as we generalized the Mitrinovi\'{c}-Adamovi\'{c} inequalities
as Theorem \ref{thm:MA}.

\subsection{Wilker-type inequalities}

The classical Wilker inequalities are as follows:
\begin{gather*}
\left(\frac{\sin{x}}{x}\right)^2+\frac{\tan{x}}{x}>2, \quad x \in \left(0,\frac{\pi}{2}\right),\\
\left(\frac{\sinh{x}}{x}\right)^2+\frac{\tanh{x}}{x}>2, \quad x \in \left(0,\infty\right).
\end{gather*}
Kl\'{e}n et al. \cite{Klen}*{Corollary 3.19} and Yin et al. \cite{YHQ}*{Theorem 3.1}
extend these inequalities to the one-parameter case: for $p \in (1,\infty)$,
\begin{gather*}
\left(\frac{\sin_p{x}}{x}\right)^p+\frac{\tan_p{x}}{x}>2, \quad x \in \left(0,\frac{\pi_p}{2}\right),\\
\left(\frac{\sinh_p{x}}{x}\right)^p+\frac{\tanh_p{x}}{x}>2, \quad x \in \left(0,\infty\right).
\end{gather*}
Moreover, Neuman \cite{Neuman2015}*{Corollary 6.3 (6.13)} obtains 
the inequalities for the two-parameter case: for $p,\ q \in (1,\infty)$,
\begin{gather}
\left(\frac{\sin_{p,q}{x}}{x}\right)^q+\frac{\tan_{p,q}{x}}{x}>2, \quad x \in \left(0,\frac{\pi_{p,q}}{2}\right), \label{eq:wilker}\\
\left(\frac{\sinh_{p,q}{x}}{x}\right)^q+\frac{\tanh_{p,q}{x}}{x}>2 \quad
\mbox{for appropriate $x$}.
\label{eq:wilkerh}
\end{gather}

However, \eqref{eq:wilker} and \eqref{eq:wilkerh} 
(with $p$ replaced by $r$) are not dual to each other. 
A dual pair of Wilker-type inequalities is as follows:

\begin{thm}[Wilker-type inequalities with duality]\label{thm:Wil2pMiya}
Let $p$ and $q$ satisfy \eqref{eq:pq} and $r$ be the positive number defined as \eqref{eq:rdefi}. 
Then,
\begin{gather}
\left(\frac{\sin_{p,q}x}{x}\right)^p+\left(\frac{\tam_{p,q}x}{x}\right)^r>2, \quad
x \in \left(0,\frac{\pi_{p,q}}{2}\right), \label{eq:wilkertrigo}\\
\left(\frac{\sinh_{p,q}x}{x}\right)^p+\left(\frac{\tamh_{p,q}x}{x}\right)^r>2, \quad
x \in \left(0,\frac{\pi_{r,q}}{2}\right). \label{eq:wilkerhyper}
\end{gather}
Moreover, \eqref{eq:wilkertrigo} and \eqref{eq:wilkerhyper} (with $p$ replaced by $r$)
are dual to each other.
\end{thm}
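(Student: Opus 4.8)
The plan is to prove only the trigonometric inequality \eqref{eq:wilkertrigo} and then obtain \eqref{eq:wilkerhyper} for free by duality. Applying Theorem \ref{lem:GHFGTFrelation}, which gives $\sin_{p,q}x=\tamh_{r,q}x$ and $\tam_{p,q}x=\sinh_{r,q}x$ for $x\in(0,\pi_{p,q}/2)$, transforms \eqref{eq:wilkertrigo} into
$$\left(\frac{\tamh_{r,q}x}{x}\right)^p+\left(\frac{\sinh_{r,q}x}{x}\right)^r>2,\quad x\in\left(0,\frac{\pi_{p,q}}{2}\right),$$
which is precisely \eqref{eq:wilkerhyper} with $p$ replaced by $r$: by \eqref{eq:r(r)=p} the second exponent $r_q(r)=p$ matches, and the domain $\pi_{r_q(r),q}/2=\pi_{p,q}/2$ matches as well. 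Since $r_q$ is a bijection of $(q/(q+1),\infty)$ by \eqref{eq:r(p)}, this simultaneously yields \eqref{eq:wilkerhyper} in full generality and the claimed duality. Thus the entire theorem reduces to \eqref{eq:wilkertrigo}.

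To prove \eqref{eq:wilkertrigo}, I would set $A:=(\sin_{p,q}x/x)^p$ and $B:=(\tam_{p,q}x/x)^r$ and apply the arithmetic--geometric mean inequality $A+B\ge 2\sqrt{AB}$. It then suffices to establish the strict product bound $AB>1$, because this forces $A+B\ge 2\sqrt{AB}>2$ (note that the strictness comes entirely from $AB>1$, so no separate discussion of the equality case $A=B$ in AM--GM is needed). Using $\tam_{p,q}x=\sin_{p,q}x/\cos_{p,q}^{p/q}x$, together with $\cos_{p,q}x\in(0,1)$ on $(0,\pi_{p,q}/2)$ from \eqref{eq:Tpythagoras} so that all powers are well defined, one computes
$$AB=\left(\frac{\sin_{p,q}x}{x}\right)^{p+r}\cos_{p,q}^{-rp/q}x.$$
The lower Mitrinovi\'{c}-Adamovi\'{c} bound in \eqref{eq:MAtrigo}, namely $\sin_{p,q}x/x>\cos_{p,q}^{1/(q+1)}x$, raised to the positive power $p+r$, gives $(\sin_{p,q}x/x)^{p+r}>\cos_{p,q}^{(p+r)/(q+1)}x$.

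The crux, and the one genuinely delicate point, is the algebraic identity
$$\frac{p+r}{q+1}=\frac{rp}{q},$$
which I expect to be the main obstacle in the sense that it is the mechanism making the MA bound exactly strong enough to close the estimate. It follows from $r=pq/(pq+p-q)$ in \eqref{eq:rdefi} by a direct computation: one checks $q-r=q^2(p-1)/(pq+p-q)$ and hence $q(p+r)-rp(q+1)=p(q-r)-rq(p-1)=0$. Granting this identity, the exponent $(p+r)/(q+1)$ equals $rp/q$, so $(\sin_{p,q}x/x)^{p+r}>\cos_{p,q}^{rp/q}x$; multiplying by $\cos_{p,q}^{-rp/q}x>0$ yields $AB>1$. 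This completes the proof of \eqref{eq:wilkertrigo}, and by the reduction above the whole theorem follows. The only real labor is verifying the identity $(p+r)/(q+1)=rp/q$; everything else is a short AM--GM argument combined with the already-established inequality \eqref{eq:MAtrigo}.
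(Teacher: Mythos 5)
Your proposal is correct and takes essentially the same route as the paper's proof: AM--GM applied to the two Wilker terms, closed by the Mitrinovi\'{c}-Adamovi\'{c} bound \eqref{eq:MAtrigo} via the exponent identity $(p+r)/(q+1)=pr/q$, followed by exactly the paper's duality step (Theorem \ref{lem:GHFGTFrelation} together with \eqref{eq:r(p)} and \eqref{eq:r(r)=p}) to get \eqref{eq:wilkerhyper}. The only cosmetic differences are that the paper keeps the geometric mean inside the inequality chain instead of isolating $AB>1$, and that your ``genuinely delicate'' identity is immediate from \eqref{eq:rdefi}: multiplying $1/p+1/r=(q+1)/q$ by $pr$ gives $p+r=pr(q+1)/q$ directly, with no need for your longer verification.
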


\begin{rem}
If $p=q$, then \eqref{eq:wilkertrigo} and \eqref{eq:wilkerhyper} are
equal to \eqref{eq:wilker} and \eqref{eq:wilkerh}; hence, 
to the one-parameter ones above, respectively.
\end{rem}

\begin{proof}
We prove \eqref{eq:wilkertrigo}. From the inequality of arithmetic and geometric 
means and \eqref{eq:MAtrigo} in Theorem \ref{thm:MA}, it follows that
\begin{align*}
\left(\frac{\sin_{p,q}x}{x}\right)^p+\left(\frac{\tam_{p,q}x}{x}\right)^r
& \geq 2\left(\frac{\sin_{p,q}x}{x}\right)^{p/2}\left(\frac{\tam_{p,q}x}{x}\right)^{r/2}\\
& =2\left(\frac{\sin_{p,q}x}{x}\right)^{p/2+r/2}\left(\frac{1}{\cos_{p,q}^{p/q}x}\right)^{r/2}\\
& >2\left(\frac{\sin_{p,q}x}{x}\right)^{p/2+r/2}\left(\frac{\sin_{p,q}x}{x}\right)^{-rp(q+1)/2q}\\
& =2.
\end{align*}

Next we show \eqref{eq:wilkerhyper}. 
For any $x \in (0,\pi_{p,q}/2)$, we have proved \eqref{eq:wilkertrigo}.
Then, Theorem \ref{lem:GHFGTFrelation} gives the dual inequality to \eqref{eq:wilkertrigo}:
$$\left(\frac{\tamh_{r,q}x}{x}\right)^p+\left(\frac{\sinh_{r,q}x}{x}\right)^r>2.$$
Owing to \eqref{eq:r(p)} and \eqref{eq:r(r)=p}, 
this means \eqref{eq:wilkerhyper}.
\end{proof}

\subsection{Huygens-type inequalities}

The classical Huygens inequalities are as follows:
\begin{gather*}
\frac{2\sin{x}}{x}+\frac{\tan{x}}{x}>3, \quad x \in \left(0,\frac{\pi}{2}\right),\\
\frac{2\sinh{x}}{x}+\frac{\tanh{x}}{x}>3, \quad x \in \left(0,\infty\right).
\end{gather*}
Kl\'{e}n et al. \cite{Klen}*{Theorem 3.16}
extend these inequalities to the one-parameter case: for $p \in (1,\infty)$,
\begin{gather*}
\frac{p\sin_p{x}}{x}+\frac{\tan_p{x}}{x}>p+1, \quad x \in \left(0,\frac{\pi_p}{2}\right),\\
\frac{p\sinh_p{x}}{x}+\frac{\tanh_p{x}}{x}>p+1, \quad x \in \left(0,\infty\right).
\end{gather*}
Moreover, Neuman \cite{Neuman2015}*{Corollary 6.3 (6.14)} obtains 
the inequalities for the two-parameter case: for $p,\ q \in (1,\infty)$,
\begin{gather}
\frac{q\sin_{p,q}{x}}{x}+\frac{\tan_{p,q}{x}}{x}>q+1, \quad x \in \left(0,\frac{\pi_{p,q}}{2}\right), \label{eq:cusa}\\
\frac{q\sinh_{p,q}{x}}{x}+\frac{\tanh_{p,q}{x}}{x}>q+1 \quad
\mbox{for appropriate $x$}.
\label{eq:cusah}
\end{gather}

However, \eqref{eq:cusa} and \eqref{eq:cusah} (with $p$ replaced by $r$)
are not dual to each other. 
A dual pair of Huygens-type inequalities is as follows:

\begin{thm}[Huygens-type inequalities with duality]\label{thm:HuyMiya}
Let $p$ and $q$ satisfy \eqref{eq:pq} and $r$ be the positive number defined as \eqref{eq:rdefi}. 
Then,
\begin{gather}
\frac{p\sin_{p,q}x}{x}+\frac{r\tam_{p,q}x}{x}>p+r, \quad 
x \in \left(0,\frac{\pi_{p,q}}{2}\right), \label{eq:Huygenstrigo}\\
\frac{p\sinh_{p,q}x}{x}+\frac{r\tamh_{p,q}x}{x}>p+r, \quad
x \in \left(0,\frac{\pi_{r,q}}{2}\right).\label{eq:Huygenshyper}
\end{gather}
Moreover, \eqref{eq:Huygenstrigo} and \eqref{eq:Huygenshyper} 
(with $p$ replaced by $r$) are dual to each other.
\end{thm}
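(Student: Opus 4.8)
The plan is to follow the proof of Theorem~\ref{thm:Wil2pMiya} almost verbatim, replacing the plain arithmetic--geometric mean inequality by its \emph{weighted} version (with weights $p$ and $r$), and then passing from the trigonometric inequality to its hyperbolic dual through Theorem~\ref{lem:GHFGTFrelation}.

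First I would prove \eqref{eq:Huygenstrigo}. The weighted AM--GM inequality applied with weights $p/(p+r)$ and $r/(p+r)$ to the positive quantities $\sin_{p,q}x/x$ and $\tam_{p,q}x/x$ gives
\[
\frac{p\sin_{p,q}x}{x}+\frac{r\tam_{p,q}x}{x}\geq (p+r)\left(\frac{\sin_{p,q}x}{x}\right)^{p/(p+r)}\left(\frac{\tam_{p,q}x}{x}\right)^{r/(p+r)}.
\]
Substituting $\tam_{p,q}x=\sin_{p,q}x/\cos_{p,q}^{p/q}x$ merges the two powers of $\sin_{p,q}x/x$ into a single first power, so the right-hand side equals $(p+r)(\sin_{p,q}x/x)(\cos_{p,q}^{-p/q}x)^{r/(p+r)}$. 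The Mitrinovi\'c--Adamovi\'c inequality \eqref{eq:MAtrigo} gives $\cos_{p,q}^{p/q}x<(\sin_{p,q}x/x)^{p(q+1)/q}$, whence
\[
\left(\frac{1}{\cos_{p,q}^{p/q}x}\right)^{r/(p+r)}>\left(\frac{\sin_{p,q}x}{x}\right)^{-p(q+1)r/(q(p+r))}.
\]
Multiplying through, the exponent of $\sin_{p,q}x/x$ becomes $1-p(q+1)r/(q(p+r))$, and the defining relation \eqref{eq:rdefi}, rewritten as $q(p+r)=pr(q+1)$, makes this exponent vanish. Hence the right-hand side exceeds $(p+r)\cdot 1=p+r$, which is \eqref{eq:Huygenstrigo}.

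Second, I would obtain \eqref{eq:Huygenshyper} by the same duality step used for Wilker. Since \eqref{eq:Huygenstrigo} holds for all $x\in(0,\pi_{p,q}/2)$, I substitute $\sin_{p,q}x=\tamh_{r,q}x$ and $\tam_{p,q}x=\sinh_{r,q}x$ from Theorem~\ref{lem:GHFGTFrelation} to obtain
\[
\frac{r\sinh_{r,q}x}{x}+\frac{p\tamh_{r,q}x}{x}>p+r,\quad x\in\left(0,\frac{\pi_{p,q}}{2}\right).
\]
By \eqref{eq:r(p)} and the involution \eqref{eq:r(r)=p} (so that $r_q(r)=p$ and $\pi_{r_q(r),q}=\pi_{p,q}$), this is precisely \eqref{eq:Huygenshyper} with the roles of $p$ and $r$ interchanged, i.e.\ the asserted dual statement.

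I do not anticipate a genuine obstacle: the argument is structurally identical to the Wilker case, and the only delicate point is confirming that the exponent left over after the weighted AM--GM and Mitrinovi\'c--Adamovi\'c steps cancels. That cancellation is not accidental but is governed by the same identity $q(p+r)=pr(q+1)$ encoding \eqref{eq:rdefi}; it is precisely the choice of the AM--GM weights proportional to the coefficients $p$ and $r$ appearing in the target inequality that produces it.
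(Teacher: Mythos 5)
Your proof is correct and takes essentially the same route as the paper: your weighted AM--GM with weights $p/(p+r)$ and $r/(p+r)$ is precisely the paper's Young inequality with $\alpha=1+r/p$, $\beta=1+p/r$ applied to $A=p\sin_{p,q}x/x$, $B=r\tam_{p,q}x/x$, and the exponent cancellation you verify via $q(p+r)=pr(q+1)$ is the paper's observation that $\beta=p(q+1)/q$. The passage to \eqref{eq:Huygenshyper} via Theorem~\ref{lem:GHFGTFrelation} together with \eqref{eq:r(p)} and \eqref{eq:r(r)=p} likewise matches the paper's duality step exactly.
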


\begin{rem}
If $p=q$, then \eqref{eq:Huygenstrigo} and \eqref{eq:Huygenshyper} are
equal to \eqref{eq:cusa} and \eqref{eq:cusah}; hence, 
to the one-parameter ones above, respectively.
\end{rem}

\begin{proof}
Let $\alpha,\ \beta$ be
$$\alpha=\frac{p(q+1)}{pq+p-q}=1+\frac{r}{p},\quad \beta=\frac{p(q+1)}{q}=1+\frac{p}{r}.$$
Since $\alpha,\ \beta >1$ and $1/\alpha+1/\beta=1$,
the following Young inequality holds true for positive numbers $A,\ B$:
$$A+B \geq (\alpha A)^{1/\alpha}(\beta B)^{1/\beta}.$$
Therefore, by this inequality and \eqref{eq:MAtrigo} in Theorem \ref{thm:MA},
\begin{align*}
\frac{p\sin_{p,q}x}{x}+\frac{r\tam_{p,q}x}{x}
& \geq \left(\alpha \frac{p\sin_{p,q}x}{x}\right)^{1/\alpha}
\left(\beta \frac{r\tam_{p,q}x}{x}\right)^{1/\beta}\\
& = (\alpha p)^{1/\alpha} (\beta r)^{1/\beta}
\frac{\sin_{p,q}x}{x} \left(\frac{1}{\cos_{p,q}^{p/q}x}\right)^{1/\beta}\\
& >  (p+r)^{1/\alpha} (r+p)^{1/\beta}
\frac{\sin_{p,q}x}{x} \left(\frac{\sin_{p,q}x}{x}\right)^{-\frac{p(q+1)}{\beta q}}\\
&=p+r.
\end{align*}

Next we show \eqref{eq:Huygenshyper}. 
For any $x \in (0,\pi_{p,q}/2)$, we have proved \eqref{eq:Huygenstrigo}.
Then, Theorem \ref{lem:GHFGTFrelation} gives the dual inequality to \eqref{eq:Huygenstrigo}:
$$\frac{p\tamh_{r,q}x}{x}+\frac{r\sinh_{r,q}x}{x}>p+r.$$
Owing to \eqref{eq:r(p)} and \eqref{eq:r(r)=p}, 
this means \eqref{eq:Huygenshyper}.
\end{proof}

\subsection{Relaxed Cusa-Huygens-type inequalities}

The classical Cusa-Huygens inequalities are as follows:
\begin{gather*}
\frac{\sin{x}}{x}<\frac{2+\cos{x}}{3}, \quad x \in \left(0,\frac{\pi}{2}\right),\\\frac{\sinh{x}}{x}<\frac{2+\cosh{x}}{3}, \quad x \in \left(0,\infty\right).
\end{gather*}
Ma et al. \cite{MSZH}*{Theorems 2 and 3} obtain 
the inequalities for the two-parameter case: for $p,\ q \in (1,2]$,
\begin{gather}
\frac{\sin_{p,q}{x}}{x}<\frac{q+\cos_{p,q}{x}}{q+1}, \quad x \in \left(0,\frac{\pi_{p,q}}{2}\right), \label{eq:cusahuygens}\\
\frac{\sinh_{p,q}{x}}{x}<\frac{q+\cosh_{p,q}{x}}{q+1} \quad
\mbox{for appropriate $x$}.
\label{eq:cusahuygensh}
\end{gather}
The inequalities for the one parameter $p=q \in (1,2]$ are given by 
Kl\'{e}n et al. \cite{Klen}*{Theorems 3.22 and 3.24}.
Unfortunately, these generalized inequalities are shown only for $p,\ q \in (1,2]$,
and \eqref{eq:cusahuygens} and \eqref{eq:cusahuygensh} (with $p$ replaced by $r$) are 
not dual to each other. 

We hope to find inequalities that hold for $p,\ q$ satisfying \eqref{eq:pq} 
and are dual to each other.
Therefore, consider the following relaxed inequalities instead of the classical Cusa-Huygens inequalities:
\begin{gather*}
\frac{\sin{x}}{x}<\sqrt{\frac{2+\cos^2{x}}{3}}, \quad x \in \left(0,\frac{\pi}{2}\right),\\\frac{\sinh{x}}{x}<\sqrt{\frac{2+\cosh^2{x}}{3}}, \quad x \in \left(0,\infty\right).
\end{gather*}
Neuman \cite{Neuman2015}*{Theorem 6 (6.7), (6.9)} generalizes 
the inequalities to the two-parameter case: for $p,\ q \in (1,\infty)$,
\begin{gather}
\frac{\sin_{p,q}{x}}{x}<\left(\frac{q+\cos_{p,q}^p{x}}{q+1}\right)^{1/p}, \quad x \in \left(0,\frac{\pi_{p,q}}{2}\right), \label{eq:cusa'}\\
\frac{\sinh_{p,q}{x}}{x}<\left(\frac{q+\cosh_{p,q}^p{x}}{q+1}\right)^{1/p} \quad
\mbox{for appropriate $x$}.
\label{eq:cusah'}
\end{gather}

However, \eqref{eq:cusa'} and \eqref{eq:cusah'} (with $p$ replaced by $r$) are 
not dual to each other. A dual pair of Cusa-Huygens-type inequalities is as follows:

\begin{thm}[Relaxed Cusa-Huygens-type inequalities with duality]\label{thm:CusaMiya}
Let $p$ and $q$ satisfy \eqref{eq:pq} and $r$ be the positive number defined as \eqref{eq:rdefi}. 
Then,
\begin{gather}
\frac{\sin_{p,q}x}{x}<\left(\frac{p+r\cos_{p,q}^px}{p+r}\right)^{1/q}, \quad
x \in \left(0,\frac{\pi_{p,q}}{2}\right), \label{eq:CusaGTF}\\
\frac{\sinh_{p,q}x}{x}<\left(\frac{p+r\cosh_{p,q}^px}{p+r}\right)^{1/q}, \quad
x \in \left(0,\frac{\pi_{r,q}}{2}\right). \label{eq:CusaGHF}
\end{gather}
Moreover, \eqref{eq:CusaGTF} and \eqref{eq:CusaGHF} (with $p$ replaced by $r$) are dual to each other.
\end{thm}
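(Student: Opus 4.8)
The plan is to follow the same two-stage pattern used for the Wilker and Huygens theorems: first establish the trigonometric inequality \eqref{eq:CusaGTF} directly, and then deduce the hyperbolic inequality \eqref{eq:CusaGHF} from it by the duality formula of Theorem \ref{lem:GHFGTFrelation}. The one genuinely new ingredient is that \eqref{eq:CusaGTF} does not seem to follow from a single application of an elementary mean inequality (as Wilker did from AM--GM, and Huygens from Young's inequality), so I expect to need a monotonicity argument built on the lower Mitrinovi\'c--Adamovi\'c bound.

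For \eqref{eq:CusaGTF}, I would first raise both sides to the $q$-th power and eliminate $\cos_{p,q}^p x$ using the generalized Pythagorean identity \eqref{eq:Tpythagoras} in the form $\cos_{p,q}^p x = 1 - \sin_{p,q}^q x$. After dividing by $\sin_{p,q}^q x$, the inequality becomes equivalent to
\begin{equation*}
\frac{1}{\sin_{p,q}^q x} - \frac{1}{x^q} > \frac{r}{p+r}.
\end{equation*}
A short computation from \eqref{eq:rdefi} yields the crucial algebraic identity $r/(p+r) = q/(p(q+1))$, so it suffices to prove
\begin{equation*}
h(x) := \frac{1}{\sin_{p,q}^q x} - \frac{1}{x^q} > \frac{q}{p(q+1)}, \quad x \in \left(0, \frac{\pi_{p,q}}{2}\right).
\end{equation*}
I would establish this by showing $h$ is strictly increasing with $\lim_{x\to 0^+} h(x) = q/(p(q+1))$. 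Differentiating gives $h'(x) = q\bigl(x^{-q-1} - \cos_{p,q}x\, \sin_{p,q}^{-q-1}x\bigr)$, so $h'(x)>0$ is equivalent to $\left(\sin_{p,q}x/x\right)^{q+1} > \cos_{p,q}x$, which is exactly the lower bound in \eqref{eq:MAtrigo} raised to the power $q+1$. The limit at the origin follows from the expansion $\sin_{p,q}x = x - x^{q+1}/(p(q+1)) + o(x^{q+1})$, which gives $\sin_{p,q}^{-q}x = x^{-q} + q/(p(q+1)) + o(1)$. Combining monotonicity with this limit produces $h(x) > q/(p(q+1))$, hence \eqref{eq:CusaGTF}.

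For \eqref{eq:CusaGHF} I would argue by duality, as in the previous two proofs. Taking \eqref{eq:CusaGTF} to the $q$-th power and substituting the formulas of Theorem \ref{lem:GHFGTFrelation}, namely $\sin_{p,q}x = \sinh_{r,q}x/\cosh_{r,q}^{r/q}x$ and $\cos_{p,q}^p x = \cosh_{r,q}^{-r}x$, gives
\begin{equation*}
\frac{\sinh_{r,q}^q x}{x^q \cosh_{r,q}^r x} < \frac{p + r\cosh_{r,q}^{-r}x}{p+r}.
\end{equation*}
Multiplying through by $\cosh_{r,q}^r x$ turns this into $\left(\sinh_{r,q}x/x\right)^q < (p\cosh_{r,q}^r x + r)/(p+r)$, which is precisely \eqref{eq:CusaGHF} with $p$ replaced by $r$; the bijectivity \eqref{eq:r(p)} and the involution \eqref{eq:r(r)=p} then deliver \eqref{eq:CusaGHF} itself together with the claimed duality.

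I expect the main obstacle to be the first stage: recognizing that \eqref{eq:CusaGTF} should be recast as the monotonicity of $h$, and that the constant $q/(p(q+1))$ arising as $\lim_{x\to 0^+}h(x)$ is forced to coincide with $r/(p+r)$, so that the inequality is sharp at the origin and the Mitrinovi\'c--Adamovi\'c bound alone suffices. The hyperbolic reduction is by contrast routine, provided the factor $\cosh_{r,q}^r x$ is cleared carefully after the substitution of Theorem \ref{lem:GHFGTFrelation} in order to recover the clean form of \eqref{eq:CusaGHF}.
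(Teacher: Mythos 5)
Your proof is correct, and it shares the paper's overall architecture---a monotonicity argument on $(0,\pi_{p,q}/2)$ driven by the lower Mitrinovi\'{c}--Adamovi\'{c} bound \eqref{eq:MAtrigo}, anchored at $x=0$, followed by a duality step that is word-for-word the paper's---but the decomposition in the first stage is genuinely different. The paper shows positivity of $f(x)=p+r-(p+r+rx^q)\sin_{p,q}^qx/x^q$: there the boundary value $\lim_{x\to+0}f(x)=0$ is immediate from $\sin_{p,q}x/x\to 1$, but the derivative estimate costs more, requiring two separate consequences of \eqref{eq:MAtrigo} (namely $x\cos_{p,q}x/\sin_{p,q}x<\cos_{p,q}^{q/(q+1)}x$ and $x^{q+1}\cos_{p,q}x/\sin_{p,q}x<1-\cos_{p,q}^px$) plus an auxiliary function $g(t)=p-(p+r)t^{q/(q+1)}+rt^p$, whose monotonicity hinges on the same algebraic identity you use, in the form $(p+r)q/(q+1)=pr$. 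You instead monotonize $h(x)=\sin_{p,q}^{-q}x-x^{-q}$, for which $h'(x)>0$ is a \emph{single} application of \eqref{eq:MAtrigo} and no auxiliary function is needed; the price is shifted to the boundary, where $\lim_{x\to+0}h(x)=q/(p(q+1))=r/(p+r)$ requires the two-term expansion $\sin_{p,q}x=x-x^{q+1}/(p(q+1))+o(x^{q+1})$. That expansion is valid under the full range \eqref{eq:pq} (expand $(1-t^q)^{-1/p}=1+t^q/p+O(t^{2q})$, integrate to get $F_{p,q}(y)=y+y^{q+1}/(p(q+1))+O(y^{2q+1})$, and invert, noting $2q+1>q+1$ even for $0<q<1$), but since it appears nowhere in the paper you should include this short derivation rather than cite it. Your remark that the limit of $h$ \emph{equals} the constant $r/(p+r)$---so the inequality is sharp at the origin---is accurate and correctly diagnoses why the AM--GM and Young devices of Theorems \ref{thm:Wil2pMiya} and \ref{thm:HuyMiya} cannot work here. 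In short: you trade the paper's auxiliary function $g$ for an asymptotic expansion, getting a cleaner derivative computation at the cost of a slightly heavier endpoint analysis; the hyperbolic stage, including the clearing of the factor $\cosh_{r,q}^rx$ and the appeal to \eqref{eq:r(p)} and \eqref{eq:r(r)=p}, coincides exactly with the paper's.
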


\begin{rem}
If $p=q$, then \eqref{eq:CusaGTF} and \eqref{eq:CusaGHF} are
equal to \eqref{eq:cusa'} and \eqref{eq:cusah'}, respectively.
\end{rem}

\begin{proof}
We prove 
$$\frac{(p+r+rx^q)\sin_{p,q}^qx}{x^q}<p+r,$$
which is equivalent to \eqref{eq:CusaGTF}. 
Let $f(x):=p+r-(p+r+rx^q)\sin_{p,q}^qx/x^q$. Then,
\begin{align*}
f'(x)=\frac{q\sin_{p,q}^qx}{x^{q+1}}\left(p+r-\frac{(p+r)x\cos_{p,q}x}{\sin_{p,q}x}-\frac{rx^{q+1}\cos_{p,q}x}{\sin_{p,q}x}\right).
\end{align*}
From \eqref{eq:MAtrigo} in Theorem \ref{thm:MA}, it follows that
$$\frac{x\cos_{p,q}x}{\sin_{p,q}x}<\cos_{p,q}^{q/(q+1)}x,\ \ \ \frac{x^{q+1}\cos_{p,q}x}{\sin_{p,q}x}<1-\cos_{p,q}^px.$$
Therefore,
\begin{align*}
f'(x)>\frac{q\sin_{p,q}^qx}{x^{q+1}}\left(p-(p+r)\cos_{p,q}^{q/(q+1)}x+r\cos_{p,q}^{p}x\right).
\end{align*}
Now let $g(t)=p-(p+r)t^{q/(q+1)}+rt^p$. Then,
\begin{align*}
g'(t)=prt^{-1/(q+1)}(t^{p-q/(q+1)}-1)
\end{align*}
Since $q/(q+1)<p$, we see $g'(t)<0$. Therefore, $g(t)>\lim_{t \to 1-0}g(t)=0$ and 
$$f'(x)>\frac{q\sin_{p,q}^qx}{x^{q+1}}g(\cos_{p,q}x)>0.$$ 
Moreover, $f(x)>\lim_{x\to+0}f(x)=0$, which means \eqref{eq:CusaGTF}.

Next we show \eqref{eq:CusaGHF}. 
For any $x \in (0,\pi_{p,q}/2)$, we have proved \eqref{eq:CusaGTF}.
Then, Theorem \ref{lem:GHFGTFrelation} gives the dual inequality to \eqref{eq:CusaGTF}:
$$\frac{\sinh_{r,q}x}{x\cosh_{r,q}^{r/q}x}<\left(\frac{p+r\cosh_{r,q}^{-r}x}{p+r}\right)^{1/q};$$
hence,
$$\frac{\sinh_{r,q}x}{x}<\left(\frac{p\cosh_{r,q}^{r}x+r}{p+r}\right)^{1/q}.$$
Owing to \eqref{eq:r(p)} and \eqref{eq:r(r)=p}, 
this means \eqref{eq:CusaGHF}.
\end{proof}
\section{Multiple- and double-angle formulas}
\label{sec:formulas}

Several multiple- and double-angle formulas for GTFs and GHFs are already known
(see \cite{Miyakawa-Takeuchi}*{Theorems 1.4 and 1.6} 
and \cite{Takeuchi2016}*{Theorem 1.1} for multiple-angle formulas;
Table \ref{hyo} for double-angle formulas). 
In this section,
we apply the duality formulas (Theorems \ref{lem:GHFGTFrelation} and \ref{lem:GTFGHFrelation}) to obtain 
multiple- and double-angle formulas
which are not covered in \cite{Miyakawa-Takeuchi} and \cite{Takeuchi2016},
for these generalized functions. 

\begin{table}[htb]
\begin{center}
\caption{The parameters for which the double angle formulas of GTF have been obtained.}\label{hyo}
{\footnotesize
  \begin{tabular}{clll} \hline
    $q$ & $(q/(q-1),2)$ & $(2,q)$ & $(q/(q-1),q)$\\ \hline 
    $2$ & $(2,2)$ Abu al-Wafa & $(2,2)$ Abu al-Wafa & $(2,2)$ Abu al-Wafa\\
    $3$ & $(3/2,2)$ Miyakawa-Takeuchi \cite{Miyakawa-Takeuchi} & $(2,3)$ Cox-Shurman \cite{Cox2005} & $(3/2,3)$ Dixon \cite{Dixon1890} \\
    $4$ & $(4/3,2)$ Sato-Takeuchi \cite{Sato-Takeuchi2020} & $(2,4)$ Fagnano & $(4/3,4)$ Edmunds\ et\ al. \cite{Edmunds2012} \\ 
    $6$ & $(6/5,2)$ Takeuchi \cite{Takeuchipre} & $(2,6)$ Shinohara \cite{Shinohara2017} & $(6/5,6)$ Takeuchi \cite{Takeuchipre}\\\hline\hline
    $q$ &  $(2q/(2+q),q/2)$&$(2q/(2+q),q)$ & $(q/2,q)$\\ \hline 
    $2$ &  $(1,1)$ Napier&$(1,2)$ V. Riccati &$(1,2)$ V. Riccati\\
    $3$ &  $(6/5,3/2)$ \textbf{Theorem \ref{thm:6/53/2double}}&$(6/5,3)$ Miyakawa-Takeuchi \cite{Miyakawa-Takeuchi} &$(3/2,3)$ Dixon \cite{Dixon1890}\\
    $4$ &  $(4/3,2)$ Sato-Takeuchi \cite{Sato-Takeuchi2020} &$(4/3,4)$ Edmunds\ et\ al. \cite{Edmunds2012} &$(2,4)$ Fagnano\\ 
    $6$ &  $(3/2,3)$ Dixon \cite{Dixon1890} &$(3/2,6)$ Miyakawa-Takeuchi \cite{Miyakawa-Takeuchi} &$(3,6)$ Miyakawa-Takeuchi \cite{Miyakawa-Takeuchi}\\\hline
\end{tabular}
}
\end{center}
\end{table}

The multiple-angle formulas in the following theorem assure that GTFs for $(2q/(2+q),q/2)$ 
can be represented in terms of GTFs for $(2q/(2+q),q)$. 
Moreover, the counterparts to GHFs are obtained as their dual inequalities.

\begin{thm}\label{thm:sin_new_double2}
Let $0<q<\infty$. Then, for $x\in[0,\pi_{2q/(2+q),q/2}/(2^{2/q+1}))=[0,\pi_{2q/(2+q),q}/2)$,
\begin{align*}
\sin_{2q/(2+q),q/2}(2^{2/q}x)=&\frac{2^{2/q}\sin_{2q/(2+q),q}x}{(1+\sin_{2q/(2+q),q}^{q/2}x)^{2/q}},\\
\cos_{2q/(2+q),q/2}(2^{2/q}x)=&\left(\frac{1-\sin_{2q/(2+q),q}^{q/2}x}{1+\sin_{2q/(2+q),q}^{q/2}x}\right)^{1/q+1/2}.
\end{align*}
Moreover, for same $x$,
\begin{align*}
\sinh_{2q/(2+q),q/2}(2^{2/q}x)&=2^{2/q}\sinh_{2,q}x(\cosh_{2,q}x+\sinh_{2,q}^{q/2}x)^{2/q},\\
\cosh_{2q/(2+q),q/2}(2^{2/q}x)&=(\cosh_{2,q}x+\sinh_{2,q}^{q/2}x)^{2/q+1}.
\end{align*}
\end{thm}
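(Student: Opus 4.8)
The plan is to establish the two trigonometric identities directly by the inverse-function (change of variables) method, and then deduce the two hyperbolic identities from them through the duality formulas, exactly in the spirit of the proofs in Section \ref{sec:inequalities}. Throughout I set $p:=2q/(2+q)$, so that $1/p=1/q+1/2$. A short computation with \eqref{eq:rdefi} shows that the dual exponent of the pair $(p,q)$ is $2$, while the dual exponent of $(p,q/2)$ equals $p$ itself (equivalently $p=2(q/2)/((q/2)+1)$, the self-dual value). I would also record at the outset that $(p,q)$, $(p,q/2)$ and $(2,q)$ all satisfy \eqref{eq:pq} for every $q\in(0,\infty)$.

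For the sine formula, write $s:=\sin_{p,q}x$, so that $x=F_{p,q}(s)$, and let $v$ denote the claimed right-hand side. Since $\sin_{p,q/2}=F_{p,q/2}^{-1}$, it suffices to prove the integral identity
\[
\int_0^{v}\frac{dt}{(1-t^{q/2})^{1/p}}=2^{2/q}\int_0^{s}\frac{d\sigma}{(1-\sigma^q)^{1/p}},
\]
because the right-hand side equals $2^{2/q}x$. I would verify this by the substitution $t=2^{2/q}\sigma(1+\sigma^{q/2})^{-2/q}$, which sends $\sigma=0,s$ to $t=0,v$. The identities that make it work are $t^{q/2}=2\sigma^{q/2}/(1+\sigma^{q/2})$, hence $1-t^{q/2}=(1-\sigma^{q/2})/(1+\sigma^{q/2})$, together with $dt=2^{2/q}(1+\sigma^{q/2})^{-2/q-1}\,d\sigma$. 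Collecting the powers of $1+\sigma^{q/2}$ produces the exponent $1/p-2/q-1=-1/p$, and, using $(1-\sigma^{q/2})(1+\sigma^{q/2})=1-\sigma^q$, the integrand collapses to $2^{2/q}(1-\sigma^q)^{-1/p}\,d\sigma$, which gives the identity. The cosine formula then follows from the generalized Pythagorean identity \eqref{eq:Tpythagoras}: $\cos_{p,q/2}^{p}(2^{2/q}x)=1-v^{q/2}=(1-s^{q/2})/(1+s^{q/2})$, and raising to the power $1/p=1/q+1/2$ yields the stated expression. Letting $s\to1$ in the integral identity gives $\pi_{p,q/2}=2^{2/q}\pi_{p,q}$, which is precisely the asserted identification of the two interval endpoints.

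For the hyperbolic pair I would transform the already-proven trigonometric formulas. Applying Theorem \ref{lem:GHFGTFrelation} to $(p,q)$ (whose dual exponent is $2$) rewrites the factor $\sin_{p,q}x$ on the right as $\sinh_{2,q}x/\cosh_{2,q}^{2/q}x$, so that $\sin_{p,q}^{q/2}x=\sinh_{2,q}^{q/2}x/\cosh_{2,q}x$. On the left I would invoke the self-dual case of the duality (Theorem \ref{lem:GTFGHFrelation} with dual exponent $p$), which gives $\sinh_{p,q/2}=\tam_{p,q/2}$ and $\cosh_{p,q/2}=1/\cos_{p,q/2}$. Substituting the trigonometric formulas and simplifying the powers of $1\pm\sin_{p,q}^{q/2}x$ reduces both right-hand sides to expressions in $\cosh_{2,q}x\mp\sinh_{2,q}^{q/2}x$; the final step uses the hyperbolic Pythagorean identity \eqref{eq:Hpythagoras} with $p=2$, namely $\cosh_{2,q}^{2}x-\sinh_{2,q}^{q}x=1$, which factors as $(\cosh_{2,q}x-\sinh_{2,q}^{q/2}x)(\cosh_{2,q}x+\sinh_{2,q}^{q/2}x)=1$ and thereby turns $(\cosh_{2,q}x-\sinh_{2,q}^{q/2}x)^{-1}$ into $\cosh_{2,q}x+\sinh_{2,q}^{q/2}x$. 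This converts the computed expressions into the stated forms.

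The routine but error-prone part, which I expect to be the main obstacle, is the bookkeeping of exponents: verifying that $1/p-2/q-1=-1/p$ in the substitution, and that the exponent $(1/q+1/2)\cdot 4/(2+q)=2/q$ arising when the duality is applied to the cosine reduces correctly, together with confirming that each duality theorem is applied on the correct argument range (the trigonometric identities hold on $(0,\pi_{p,q}/2)$, the argument $2^{2/q}x$ then ranges over $(0,\pi_{p,q/2}/2)$, and $\sin_{p,q}x$ is converted on $(0,\pi_{p,q}/2)$ with dual exponent $2$). No genuinely new idea beyond the single substitution and the two Pythagorean identities is needed.
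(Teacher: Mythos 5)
Your proposal is correct; I checked the key computations and they go through. The substitution $t=2^{2/q}\sigma(1+\sigma^{q/2})^{-2/q}$ indeed gives $t^{q/2}=2\sigma^{q/2}/(1+\sigma^{q/2})$, $dt=2^{2/q}(1+\sigma^{q/2})^{-2/q-1}d\sigma$, and with $1/p=1/q+1/2$ the exponent bookkeeping $1/p-2/q-1=-1/p$ collapses the integrand to $2^{2/q}(1-\sigma^q)^{-1/p}d\sigma$, so $F_{p,q/2}(v)=2^{2/q}F_{p,q}(s)$ and letting $s\to1$ yields $\pi_{p,q/2}=2^{2/q}\pi_{p,q}$, which is the asserted endpoint identification (valid also when both are infinite, i.e.\ $q\le2$). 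Your route differs from the paper's in the first half: the paper performs its substitution ($t^q=u^q/(4(1-u^{q/2}))$) in the \emph{hyperbolic} integral $\sinh_{2,q}^{-1}y=\int_0^y(1+t^q)^{-1/2}dt$, obtaining the bridge identity \eqref{eq:sinh2qsin2q2+q} between $\sinh_{2,q}^{-1}$ and $\sin_{2q/(2+q),q/2}^{-1}$ (with the endpoint identity coming from $y\to\infty$), and then needs one application of Theorem \ref{lem:GTFGHFrelation} (via $\tamh_{2,q}x=\sin_{2q/(2+q),q}x$) to reach the pure-GTF sine formula; you instead substitute directly in the GTF integral $F_{p,q}\to F_{p,q/2}$, so your trigonometric half is self-contained and uses no duality at all. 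The hyperbolic halves are essentially identical: both apply the self-dual case of Theorem \ref{lem:GTFGHFrelation} for $(p,q/2)$ (where $\sinh_{p,q/2}=\tam_{p,q/2}$, $\cosh_{p,q/2}=1/\cos_{p,q/2}$), then Theorem \ref{lem:GHFGTFrelation} with $r_q(p)=2$, exploiting the factorization $(\cosh_{2,q}x-\sinh_{2,q}^{q/2}x)(\cosh_{2,q}x+\sinh_{2,q}^{q/2}x)=1$ from \eqref{eq:Hpythagoras}. What each approach buys: yours is slightly more direct and makes the trigonometric statement independent of the duality machinery; the paper's produces the intermediate relation \eqref{eq:sinh2qsin2q2+q} connecting $\sinh_{2,q}$ with $\sin_{2q/(2+q),q/2}$, an identity of independent interest that also showcases the duality mechanism the paper is advertising. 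One trivial discrepancy: you derive the final $\cosh_{2q/(2+q),q/2}$ formula by inverting the cosine formula through $\cosh_{p,q/2}=1/\cos_{p,q/2}$, while the paper cites a Pythagorean identity; both are valid one-line finishes.
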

\begin{proof}
The former half is shown as follows. Let $y\in[0,\infty)$. Setting $t^q=u^q/(4(1-u^{q/2}))$ in
$$\sinh_{2,q}^{-1}y=\int^y_0\frac{dt}{\sqrt{1+t^q}},$$
we have
\begin{align*}
\sinh_{2,q}^{-1}y=&2^{-2/q-1}\int^{2^{2/q}y/(y^{q/2}+\sqrt{y^q+1})^{2/q}}_0\frac{2(1-u^{q/2})^{1/2}}{2-u^{q/2}}\cdot\frac{2-u^{q/2}}{(1-u^{q/2})^{1/q+1}}\ du\\
=&2^{-2/q}\int^{2^{2/q}y/(y^{q/2}+\sqrt{y^q+1})^{2/q}}_0\frac{du}{(1-u^{q/2})^{1/q+1/2}};
\end{align*}
that is,
\begin{align}
\sinh_{2,q}^{-1}y=2^{-2/q}\sin_{2q/(2+q),q/2}^{-1}\left(\frac{2^{2/q}y}{(y^{q/2}+\sqrt{y^q+1})^{2/q}}\right).\label{eq:sinh2qsin2q2+q}
\end{align}
Letting $y\to\infty$ in \eqref{eq:sinh2qsin2q2+q} and using $r_q(2)=2q/(2+q)$, we get
$$\frac{\pi_{2q/(2+q),q}}{2}=\frac{\pi_{2q/(2+q),q/2}}{2^{2/q+1}}.$$
From \eqref{eq:sinh2qsin2q2+q}, we see that for $x\in[0,\pi_{2q/(2+q),q/2}/(2^{2/q+1}))=[0,\pi_{2q/(2+q),q}/2)$,
\begin{align*}
\sin_{2q/(2+q),q/2}(2^{2/q}x)=&\frac{2^{2/q}\sinh_{2,q}x}{(\sinh_{2,q}^{q/2}x+\cosh_{2,q}x)^{2/q}}=\frac{2^{2/q}\tamh_{2,q}x}{(\tamh_{2,q}^{q/2}x+1)^{2/q}}.
\end{align*}
Theorem \ref{lem:GTFGHFrelation} with $r_q(2)=2q/(2+q)$ shows that the right-hand side becomes
\begin{align*}
\frac{2^{2/q}\sin_{2q/(2+q),q}x}{(\sin_{2q/(2+q),q}^{q/2}x+1)^{2/q}}.
\end{align*}
The formula of $\cos_{2q/(q+2),q/2}$ immediately follows from \eqref{eq:Tpythagoras}.

The latter half is proved as follows. By Theorem \ref{lem:GTFGHFrelation} 
with $r_{q/2}{(2q/(2+q))}=2q/(2+q)$ and the former half,
\begin{align*}
\sinh_{2q/(2+q),q/2}(2^{2/q}x)=\frac{\sin_{2q/(2+q),q/2}(2^{2/q}x)}{\cos_{2q/(2+q),q/2}^{4/(2+q)}(2^{2/q}x)}=\frac{2^{2/q}\sin_{2q/(2+q),q}x}{(1-\sin_{2q/(2+q),q}^{q/2}x)^{2/q}}.
\end{align*}
Theorem \ref{lem:GHFGTFrelation} with $r_q(2q/(2+q))=2$ shows that the right-hand side becomes
$$2^{2/q}\sinh_{2,q}x(\cosh_{2,q}x+\sinh_{2,q}^{q/2}x)^{2/q}.$$
The formula of $\cosh_{2q/(q+2),q/2}$ immediately follows from \eqref{eq:Tpythagoras}.
\end{proof}

\begin{rem}
If $q=2$, then
the formulas of $\sin_{2q/(2+q),q/2}$ and $\sinh_{2q/(2+q),q/2}$ are 
\begin{gather*}
1-e^{-2x}=\frac{2\tanh{x}}{1+\tanh{x}},\\
e^{2x}-1=2\sinh{x}(\cosh{x}+\sinh{x}).
\end{gather*}
\end{rem}

The following double-angle formula is proved by
\cite{Miyakawa-Takeuchi}*{Theorem 3.8}.

\begin{lem}[\cite{Miyakawa-Takeuchi}]\label{lem:sin6/53double}
For $x\in[0,\pi_{6/5,3}/4)$,
$$\sin_{6/5,3}(2x)=\frac{4\cos_{6/5,3}^{1/5}x(3\cos_{6/5,3}^{3/5}x+1)(1-\cos_{6/5,3}^{3/5}x)^{1/3}}{\left(16\cos_{6/5,3}^{3/5}x+(3\cos_{6/5,3}^{3/5}x+1)^3(1-\cos_{6/5,3}^{3/5}x)\right)^{2/3}}.$$
\end{lem}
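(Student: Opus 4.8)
The plan is to characterize $u(x):=\sin_{6/5,3}(2x)$ as the solution of a first-order initial value problem and then to verify that the proposed right-hand side, viewed as a function of $x$ through $c:=\cos_{6/5,3}x$, solves the same problem. Differentiating by the chain rule and invoking the Pythagorean identity \eqref{eq:Tpythagoras} in the form $\cos_{6/5,3}(2x)=(1-\sin_{6/5,3}^3(2x))^{5/6}$, one sees that $u'=2(1-u^3)^{5/6}$ with $u(0)=0$. On every compact subinterval of $[0,\pi_{6/5,3}/4)$ the value $u$ stays in $[0,1)$, where the right-hand side of this ODE is $C^1$ in $u$; hence the problem has a unique solution, and it suffices to check that the claimed formula satisfies it.

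To make the verification manageable I would first rationalize the exponents by setting $v:=\cos_{6/5,3}^{3/5}x$, so that $\cos_{6/5,3}^{1/5}x=v^{1/3}$, $\sin_{6/5,3}^3x=1-v^2$, and $1-\cos_{6/5,3}^{3/5}x=1-v$. Differentiating \eqref{eq:Tpythagoras} gives $\frac{d}{dx}\cos_{6/5,3}x=-\tfrac52\sin_{6/5,3}^2x\,\cos_{6/5,3}^{4/5}x$, whence $v'=-\tfrac32 v^{2/3}(1-v^2)^{2/3}$. In these variables the proposed right-hand side becomes $\Phi(v)=4v^{1/3}(3v+1)(1-v)^{1/3}D^{-2/3}$, where $D:=(3v+1)^3(1-v)+16v=-27v^4+18v^2+24v+1$, and the target identity reads $\Phi'(v)\,v'=2(1-\Phi^3)^{5/6}$.

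The crux is a single algebraic simplification. Writing $P:=(3v+1)^3(1-v)$ so that $\Phi^3=64vP/D^2$ and $D=P+16v$, one computes $1-\Phi^3=(D^2-64vP)/D^2=\big((P-16v)/D\big)^2$, and the quartic $P-16v=-27v^4+18v^2-8v+1$ factors as $-(3v-1)^3(v+1)$. Therefore $1-\Phi^3=\big((3v-1)^3(v+1)/D\big)^2$. Since $v\in(1/3,1]$ on the interval in question (with $v=1$ at $x=0$ and $v\to1/3$ exactly at the excluded endpoint $x=\pi_{6/5,3}/4$, where $u\to1$), every base is positive and $(1-\Phi^3)^{5/6}=(3v-1)^5(v+1)^{5/3}D^{-5/3}$. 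It then remains to compute $\Phi'(v)$, conveniently by logarithmic differentiation, and to confirm after clearing the common powers of $v$, $1\pm v$, $3v\pm1$, and $D$ that $\Phi'(v)\,v'$ reduces to $2(3v-1)^5(v+1)^{5/3}D^{-5/3}$. Finally the factor $(1-v)^{1/3}$ forces $\Phi=0$ at $x=0$, matching $u(0)=0$, and uniqueness closes the argument.

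The main obstacle is precisely this algebraic identity: recognizing the factorization $P-16v=-(3v-1)^3(v+1)$, which collapses $1-\Phi^3$ into a perfect square and is exactly what renders $(1-\Phi^3)^{5/6}$ rational in $v$, and then carrying the residual polynomial bookkeeping through without sign errors in the fractional powers. An alternative route, closer in spirit to the proof of Theorem \ref{thm:sin_new_double2}, would exploit $r_3(6/5)=2$: using Theorem \ref{lem:GHFGTFrelation} with $(p,q)=(6/5,3)$, one has $\sin_{6/5,3}x=\tamh_{2,3}x$ and $\cos_{6/5,3}x=\cosh_{2,3}^{-5/3}x$, so the Lemma is equivalent to a double-angle statement for the dual functions $\sinh_{2,3},\cosh_{2,3}$, which one could establish by a substitution in $\sinh_{2,3}^{-1}(y)=\int_0^y(1+t^3)^{-1/2}\,dt$. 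I expect the same quartic $D$ and the same factorization to resurface, so this reorganizes rather than removes the core computation.
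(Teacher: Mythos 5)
Your argument is correct, but it takes a genuinely different route from the paper, which in fact gives no proof of Lemma \ref{lem:sin6/53double} at all: the statement is imported from Theorem 3.8 of \cite{Miyakawa-Takeuchi}, where it is \emph{derived} via the duality formulas (using $r_3(6/5)=2$) and multiple-angle/substitution machinery of the kind displayed in the proof of Theorem \ref{thm:sin_new_double2} --- essentially the ``alternative route'' you sketch in your last paragraph. Your ODE-uniqueness verification is self-contained and more elementary, and its two computational cruxes do check out: with $P=(3v+1)^3(1-v)$ and $D=P+16v=-27v^4+18v^2+24v+1$ one indeed has $D^2-64vP=(P-16v)^2$ and $P-16v=-27v^4+18v^2-8v+1=-(3v-1)^3(v+1)$; moreover the step you defer does close, since logarithmic differentiation gives
\begin{equation*}
\frac{\Phi'}{\Phi}=\frac{D(-15v^2+10v+1)-2D'\,v(3v+1)(1-v)}{3v(3v+1)(1-v)D}
=\frac{-(3v-1)^5(v+1)}{3v(3v+1)(1-v)D},
\end{equation*}
the sextic in the numerator factoring exactly as shown, whence $\Phi'(v)\,v'=2(3v-1)^5(v+1)^{5/3}D^{-5/3}=2(1-\Phi^3)^{5/6}$ as required. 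The one place where you assert rather than prove is the sign determination $v\in(1/3,1]$ on $[0,\pi_{6/5,3}/4)$ (equivalently $\cos_{6/5,3}(\pi_{6/5,3}/4)=3^{-5/3}$), which is not known a priori; it does follow from your own setup by a first-crossing argument that should be made explicit --- if $x^*<\pi_{6/5,3}/4$ were the first point with $v(x^*)=1/3$, then uniqueness on $[0,x^*)$ and continuity would force $u(x^*)=\Phi(1/3)=1$, contradicting $\sin_{6/5,3}(2x^*)<1$ for $2x^*<\pi_{6/5,3}/2$. As for what each approach buys: yours is shorter, needs no input beyond \eqref{eq:Tpythagoras} and Picard--Lindel\"of, and avoids the duality apparatus entirely, but it only \emph{confirms} a formula one must already possess; the cited derivation explains where the expression comes from, produces companion formulas along the way, and fits the parameter-shifting scheme that Section \ref{sec:formulas} builds on.
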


Now, we are in a position to show the double-angle formula of $\sin_{6/5,3/2}$.

\begin{thm}\label{thm:6/53/2double}
For $x\in[0,\pi_{6/5,3/2}/4)$,
$$\sin_{6/5,3/2}(2x)=(\Theta \circ \Phi \circ \Psi)(\cos_{6/5,3/2}{x}),$$
where
\begin{gather*}
\Theta(x)=\left(\frac{2x}{1+x}\right)^{2/3},\\
\Phi(x)=\frac{8\sqrt{x(3x+1)^3(1-x)}}{16x+(3x+1)^3(1-x)},\\
\Psi(x)=\frac{2x^{3/5}}{1+x^{6/5}}.
\end{gather*}
\end{thm}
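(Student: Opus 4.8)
The plan is to reduce everything to two results already in hand: the multiple-angle identities of Theorem \ref{thm:sin_new_double2} with $q=3$ (which tie $\sin_{6/5,3/2}$ and $\cos_{6/5,3/2}$ to $\sin_{6/5,3}$ and $\cos_{6/5,3}$) and the double-angle formula of Lemma \ref{lem:sin6/53double} for $\sin_{6/5,3}$. The three maps $\Theta,\Phi,\Psi$ should turn out to be exactly the three transformations supplied by these results, the key structural point being that the implicit rescalings of the argument ($2^{2/3}$, a doubling inside $\sin_{6/5,3}$, and $2^{-2/3}$) compose to the single doubling $x\mapsto 2x$.

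Specializing Theorem \ref{thm:sin_new_double2} to $q=3$ gives, for $t\in[0,\pi_{6/5,3}/2)$,
\[
\sin_{6/5,3/2}(2^{2/3}t)=\frac{2^{2/3}\sin_{6/5,3}t}{(1+\sin_{6/5,3}^{3/2}t)^{2/3}},\qquad
\cos_{6/5,3/2}(2^{2/3}t)=\left(\frac{1-\sin_{6/5,3}^{3/2}t}{1+\sin_{6/5,3}^{3/2}t}\right)^{5/6}.
\]
The first identity is precisely $\sin_{6/5,3/2}(2^{2/3}t)=\Theta(\sin_{6/5,3}^{3/2}t)$, which pins down $\Theta$. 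Taking $t=2^{1/3}x$ therefore yields $\sin_{6/5,3/2}(2x)=\Theta(\sin_{6/5,3}^{3/2}(2^{1/3}x))$, so it remains to show $\sin_{6/5,3}^{3/2}(2^{1/3}x)=\Phi(\Psi(\cos_{6/5,3/2}x))$.

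Write $2^{1/3}x=2u$ with $u=2^{-2/3}x$; using $\pi_{6/5,3}/2=\pi_{6/5,3/2}/2^{5/3}$ one checks that $x\in[0,\pi_{6/5,3/2}/4)$ forces $u\in[0,\pi_{6/5,3}/4)$, the exact domain of Lemma \ref{lem:sin6/53double}. I would then apply that lemma to $\sin_{6/5,3}(2u)$, introduce the abbreviation $v:=\cos_{6/5,3}^{3/5}u$, and raise the result to the power $3/2$. Here the denominator exponent collapses through $\tfrac23\cdot\tfrac32=1$ and the numerator simplifies, leaving exactly $\sin_{6/5,3}^{3/2}(2u)=\Phi(v)$; this identifies $\Phi$. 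Finally, to obtain $v$ in terms of $\cos_{6/5,3/2}x$, I would combine the cosine relation above (with $t=u$) with the Pythagorean identity $\cos_{6/5,3}^{6/5}u+\sin_{6/5,3}^3u=1$, which gives $\sin_{6/5,3}^{3/2}u=\sqrt{1-v^2}$. Setting $a=\cos_{6/5,3/2}^{6/5}x$ and $w=\sqrt{1-v^2}$ turns the cosine relation into the M\"obius equation $a=(1-w)/(1+w)$, whence $w=(1-a)/(1+a)$, then $v^2=4a/(1+a)^2$ and $v=2a^{1/2}/(1+a)=\Psi(\cos_{6/5,3/2}x)$. Composing the three identifications gives the claim.

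The main obstacle is entirely computational: confirming that raising the rather unwieldy right-hand side of Lemma \ref{lem:sin6/53double} to the power $3/2$ collapses cleanly to $\Phi(v)$ after the substitution $v=\cos_{6/5,3}^{3/5}u$, and that inverting the cosine relation produces precisely the map $\Psi$. Both steps are routine once the auxiliary variables $v$, $a=\cos_{6/5,3/2}^{6/5}x$ and $w=\sqrt{1-v^2}$ are in place, but matching the fractional exponents demands care; no genuine analytic difficulty remains, since the monotonicity and domain bookkeeping is handled by the $\pi$-value computation already recorded in Theorem \ref{thm:sin_new_double2}.
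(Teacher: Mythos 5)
Your proposal is correct and takes essentially the same route as the paper: Theorem \ref{thm:sin_new_double2} with $q=3$ supplies $\Theta$, Lemma \ref{lem:sin6/53double} raised to the power $3/2$ supplies $\Phi$ (your simplification to $\Phi(v)$ with $v=\cos_{6/5,3}^{3/5}u$ checks out), and the Pythagorean identity supplies $\Psi$, with the identical argument bookkeeping $u=2^{-2/3}x$. The only cosmetic difference is that the paper obtains $\Psi$ by inverting the sine relation through $\Theta^{-1}$ and then applying \eqref{eq:Tpythagoras}, whereas you read it off the cosine formula of Theorem \ref{thm:sin_new_double2} via the M\"obius substitution $a=(1-w)/(1+w)$ --- the same computation in different clothing.
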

\begin{proof}
From Theorem \ref{thm:sin_new_double2} with $q=3$, for $x\in[0,\pi_{6/5,3/2}/(2^{5/3}))=[0,\pi_{6/5,3}/2)$,
$$\sin_{6/5,3/2}(2^{2/3}x)=\frac{2^{2/3}\sin_{6/5,3}x}{(1+\sin_{6/5,3}^{3/2}x)^{2/3}}
=\Theta(\sin_{6/5,3}^{3/2}x);$$
hence,
$$\sin_{6/5,3}^{3/2}x=\Theta^{-1}(\sin_{6/5,3/2}(2^{2/3}x))
=\frac{\sin_{6/5,3/2}^{3/2}(2^{2/3}x)}{2-\sin_{6/5,3/2}^{3/2}(2^{2/3}x)}.$$
Thus, from \eqref{eq:Tpythagoras},
\begin{align}
\cos_{6/5,3}^{3/5}x=\frac{2\cos_{6/5,3/2}^{3/5}(2^{2/3}x)}{1+\cos_{6/5,3/2}^{6/5}(2^{2/3}x)}
=\Psi(\cos_{6/5,3/2}{(2^{2/3}x)}). \label{eq:c6/32to6/53/2}
\end{align}

Now,\ let $x\in[0,\pi_{6/5,3/2}/4)$ and $y:=x/(2^{2/3})$.
It follows from Theorem \ref {thm:sin_new_double2} with $q=3$ 
that since $2y\in[0,\pi_{6/5,3/2}/(2^{5/3}))=[0,\pi_{6/5,3}/2)$, we get
\begin{align}
\sin_{6/5,3/2}(2x)=\sin_{6/5,3/2}(2^{2/3}\cdot 2y)=\Theta(\sin_{6/5,3}^{3/2}{(2y)}).
\label{eq:s6/53/2(2x)Tos6/532y}
\end{align}
Here,\ Lemma \ref{lem:sin6/53double} and \eqref{eq:c6/32to6/53/2} yield
\begin{align*}
\sin_{6/5,3}^{3/2}(2y)=\Phi(\cos_{6/5,3}^{3/5}y)=\Phi(\Psi(\cos_{6/5,3/2}(2^{2/3}y)))
=\Phi(\Psi(\cos_{6/5,3/2}x)).
\end{align*}
Therefore, from \eqref{eq:s6/53/2(2x)Tos6/532y}, we have
\begin{align*}
\sin_{6/5,3/2}(2x)=\Theta(\Phi(\Psi(\cos_{6/5,3/2}x)))
=(\Theta \circ \Phi \circ \Psi)(\cos_{6/5,3/2}x).
\end{align*}
The proof is completed.
\end{proof}




\end{document}